\documentclass{amsart}

\usepackage{amsmath,amssymb,amsthm,amsfonts,mathrsfs}
\usepackage{fullpage}
\usepackage{relsize}
\usepackage{colonequals}

\usepackage[colorlinks=true,
linkcolor=blue,
anchorcolor=blue,
citecolor=red
]{hyperref}

\allowdisplaybreaks 
\newtheorem{theorem}{Theorem}[section]

\newtheorem{corollary}[theorem]{Corollary}
\newtheorem{proposition}[theorem]{Proposition}

\theoremstyle{definition}

\theoremstyle{remark}
\newtheorem{remark}[theorem]{Remark}
\newcommand{\N}{\mathbb{N}}
\newcommand{\Z}{\mathbb{Z}}

\newcommand{\R}{\mathbb{R}}
\newcommand{\C}{\mathbb{C}}

\newcommand{\ve}{\varepsilon}
\newcommand{\on}{\operatorname}
\renewcommand{\mod}[1]{\,\on{mod}#1}
\newcommand{\of}[1]{\left(#1\right)}
\newcommand{\set}[1]{\left\{#1\right\}}
\newcommand{\AVG}{\frac{1}{N} \sum_{n=1}^{N}}
\newcommand{\BEu}[1]{\underset{#1}{\mathlarger{\mathlarger{\mathbb{E}}}^{~}}\,}



\author[B. Wang]{Biao Wang}
\address{School of Mathematics and Statistics, Yunnan University, Kunming, Yunnan 650091, China}
\email{bwang@ynu.edu.cn}

\author{Shaoyun Yi}
\address{School of Mathematical Sciences, Xiamen University, Xiamen, Fujian 361005, China}
\email{yishaoyun926@xmu.edu.cn}

\date{\today}

\makeatletter
\@namedef{subjclassname@2020}{\textup{}2020 Mathematics Subject Classification}
\makeatother

\title{The prime number theorem over integers of power-free polynomial values}
\subjclass[2020]{11N37, 37A44}
\keywords{prime number theorem, number of prime factors, power-free integers, unique ergodicity}

\begin{document}
	
\begin{abstract}
Let $f(x)\in \Z[x]$ be an irreducible polynomial of degree $d\ge 1$. Let $k\ge2$ be an integer. The number of integers $n$ such that $f(n)$ is $k$-free is widely studied in the literature. In principle, one expects that $f(n)$ is $k$-free infinitely often, if $f$ has no fixed $k$-th power divisor. In 2022, Bergelson and Richter established a new dynamical generalization of the prime number theorem (PNT).   Inspired by their work, one may expect that this generalization of the PNT also holds over integers of power-free polynomial values. In this note, we establish such variants of Bergelson and Richter's theorem for several polynomials studied by Estermann, Hooley, Heath-Brown, Booker and Browning.
\end{abstract}

\maketitle

\numberwithin{equation}{section}

\section{Introduction}

For any $n\in\N$, let $\Omega(n)$ be the number of prime divisors of $n$ counted with multiplicity.  Let $\lambda(n)=(-1)^{\Omega(n)}$ be the Liouville function. It is well-known (e.g., \cite{Landau1909}) that the prime number theorem (PNT) is equivalent to the assertion that
\begin{equation}\label{pnt_lambda} 
			\lim_{N \to \infty} \frac1N\sum_{n=1}^N \lambda(n)=0.
\end{equation}

Let $(X,\mu, T)$ be a uniquely ergodic topological dynamical system. In 2022, Bergelson and Richter \cite{BergelsonRichter2022} established a new dynamical generalization of the PNT, showing that  
\begin{equation}\label{eqn_BR2022thmA}
		\lim_{N\to\infty}\frac1N\sum_{n=1}^N g(T^{\Omega(n)}x)=\int_X g \,d\mu
\end{equation}
holds for any $g\in C(X)$ and $x\in X$, where $C(X)$ denotes the set of continuous functions
defined on $X$.   The equivalent form \eqref{pnt_lambda}  of the PNT is recovered from  \eqref{eqn_BR2022thmA} by taking $(X,T)$ as rotation on two points. 

Let $\boldsymbol{\mu}(n)$ be the M\"obius function, which is defined to be $1$ if $n = 1$,  $(-1)^r$ if $n$ is the product of $r$ distinct primes, and zero otherwise. For an integer $k\ge2$,  a natural number $n$ is said to be \emph{$k$-free} if there is no prime $p$ such that $p^k\mid n$. Then $\boldsymbol{\mu}^2(n)$ is the indicator function of squarefree numbers. If the numbers $n$ are restricted to be squarefree in \eqref{eqn_BR2022thmA}, then Bergelson and Richter \cite{BergelsonRichter2022} also proved that
\begin{equation}\label{eqn_BR2022cor1.8}
	\lim_{N \to \infty} \frac1N\sum_{\substack{1\le n \le N\\ n\,\text{squarefree}}} g(T^{\Omega(n)}x)=\frac6{\pi^2}\int_X g \,d\mu 
\end{equation}
holds for any $g\in C(X)$ and $x\in X$. This is another  dynamical generalization of the PNT. Taking $(X,T)$ as rotation on two points in \eqref{eqn_BR2022cor1.8} gives the following well-known
equivalent form of the PNT:
$$	
\lim_{N \to \infty} \frac1N\sum_{n=1}^N \boldsymbol{\mu}(n)=0.
$$

Motivated by \eqref{eqn_BR2022cor1.8}, the first author \cite{Wang2025bams} recently proved the following variant of Bergelson and Richter's theorem along twins of squarefree numbers: 
	\begin{equation}\label{Wang2025bams}
	\lim_{N\to\infty}\frac1N \sum_{\substack{1\leq n \leq N\\ n,\, n+1 \text{ squarefree}}} g(T^{\Omega(n)}x)= \prod_{p}\Big(1-\frac{2}{p^2}\Big) \int_X g \,d\mu
\end{equation}
holds for any $g\in C(X)$ and $x\in X$. This is related to counting the number of integers $n$ such that $n^2+n$ is squarefree. Indeed, in 1932, Carlitz \cite{Carlitz1932} showed that
\begin{equation}\label{Carlitz}
	\sum_{n\leq N}\boldsymbol{\mu}^2(n^2+n)=\prod_{p}\Big(1-\frac2{p^2}\Big) N+ O(N^{2/3+\ve})
\end{equation}
for any $\ve>0$, where the implied constant depends on $\ve$. In 1984, Heath-Brown \cite{HB1984} improved the error term of \eqref{Carlitz} to $O(N^{7/12}\log^7N)$ by using the square sieve. 
It was further improved by Reuss \cite{Reuss2015} to $O(N^{0.578+\ve})$ by using the approximate determinant method introduced by Heath-Brown in \cite{HB2009, HB2012}. 

In general, a number of results  have been established on counting the number of integers with power-free polynomial values in the literature. For instance, in 1931, Estermann \cite{Estermann1931} showed that
\begin{equation}\label{Estermann1931}
	\sum_{n\leq N} \boldsymbol{\mu}^2(n^2+1)=c_0N + O(N^{2/3}\log N),
\end{equation}
where $c_0$ is an absolute constant. The error term of \eqref{Estermann1931} was improved to $O(N^{7/12+\ve})$ by Heath-Brown \cite{HB2012} in 2012. Moreover, in \cite{HB2013} he showed that for an irreducible polynomial $x^d + c \in \Z[x]$ of degree $d$,  if $k \geq (5d + 3)/9$, then there is a positive constant $\delta(d)$ such that
\begin{equation}\label{HB2013}
	\#\left\{n\leq N \colon n^d+c \text{ is }k\text{-free}\right\} = \prod_{p}\Big(1-\frac{\rho(p^k)}{p^k}\Big) \cdot N + O(N^{1-\delta(d)}),
\end{equation}
where $\rho(q)$ denotes the number of solutions to $\nu^d+c\equiv 0 \mod{q}$. Browning showed in \cite{Browning2011} that for any general irreducible polynomial $f\in \Z[x]$  of degree $d\ge3$, that
\begin{equation}\label{Browning2011}
	\#\set{n\leq N \colon f(n) \text{ is } k\text{-free}} \sim \prod_p\Big(1-\frac{\rho_f(p^k)}{p^k}\Big) \cdot N
\end{equation}
holds, provided that $k \geq (3d+1)/4$.  Here $\rho_f(q)$ denotes the number of solutions to $f(\nu)\equiv 0\mod{q}$, i.e.,
\begin{equation}\label{dfn_rho}
	\rho_f(q)\colonequals \#\set{ \nu \mod{q}\colon f(\nu) \equiv 0 \mod{q}}.
\end{equation}
In \cite{Xiao2017}, Xiao gave another proof of Browning's result \eqref{Browning2011}. For results on establishing \eqref{Browning2011} for other polynomials, see \cite{Ricci1933, Hooley1967, Nair1976, Nair1979, FriedlanderIwaniec2010, Reuss2015blms, BrowningShparlinski2024} and so on. 

In principle, one expects that \eqref{Browning2011} holds for any irreducible polynomial $f(x)$ with no fixed $k$-th power divisor, i.e., there is no prime $p$ such that $p^k\mid f(n)$ for all $n\in\Z$. Motivated by \eqref{eqn_BR2022cor1.8}-\eqref{Browning2011}, we expect that Bergelson and Richter's theorem \eqref{eqn_BR2022thmA} holds for power-free values of polynomials. That is, let $k\ge2$, and let $f(x)\in \Z[x]$ be an irreducible polynomial  with no fixed $k$-th power divisor, then in a uniquely ergodic topological dynamical system $(X,\mu, T)$, we expect that
	 	\begin{equation}\label{conjecture}
		\lim_{N\to\infty}\frac1N\sum_{1\leq n\leq N} \nu_k(f(n)) g(T^{\Omega(n)}x) = \prod_p\Big(1-\frac{\rho_f(p^k)}{p^k}\Big)\cdot \int_X g\,d\mu
	\end{equation}
holds for any $g\in C(X)$ and $x\in X$, where $\nu_k(n)$ is the indicator function of $k$-free numbers. In this article, building on the work of Heath-Brown \cite{HB2013} and Browning \cite{Browning2011}, we will establish \eqref{conjecture} for the polynomials in \eqref{HB2013} and \eqref{Browning2011}.

\begin{theorem}\label{mainthm}
	Let $k,d$ be two integers with $k\ge2$. Let $f(x)\in \Z[x]$ be an irreducible polynomial of degree $d\ge 1$.  Let $(X,\mu, T)$ be a uniquely ergodic topological dynamical system.   If $f(x)=x^d+c$ with $k \geq (5d + 3)/9$ and $c\in\Z$, or $k\ge 3d/4+1/4$ for $d\ge3$, then we have
	 	\begin{equation}\label{mainthm_eqn}
		\lim_{N\to\infty}\frac1N\sum_{1\leq n\leq N} \nu_k(f(n)) g(T^{\Omega(n)}x) = \prod_p\Big(1-\frac{\rho_f(p^k)}{p^k}\Big)\cdot \int_X g\,d\mu
	\end{equation}
for any $g\in C(X)$ and $x\in X$, where $\rho_f(p^k)$ is defined in \eqref{dfn_rho}.
\end{theorem}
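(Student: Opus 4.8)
The plan is to detect $k$-freeness of $f(n)$ via the elementary identity $\nu_k(m)=\sum_{e\,:\,e^k\mid m}\boldsymbol{\mu}(e)$, which rewrites the left-hand side of \eqref{mainthm_eqn} as
\[
\frac1N\sum_{e\ge1}\boldsymbol{\mu}(e)\sum_{\substack{n\le N\\ e^k\mid f(n)}}g(T^{\Omega(n)}x).
\]
For a fixed squarefree $e$ the set $\set{n:e^k\mid f(n)}$ is a union of $\rho_f(e^k)$ residue classes modulo $e^k$, so each inner sum is a finite combination of sums of $g(T^{\Omega(n)}x)$ along arithmetic progressions. The first ingredient is therefore an arithmetic-progressions refinement of \eqref{eqn_BR2022thmA}: for every $q\in\N$, $a\in\Z$, $g\in C(X)$ and $x\in X$,
\[
\lim_{N\to\infty}\frac1N\sum_{\substack{n\le N\\ n\equiv a\pmod q}}g(T^{\Omega(n)}x)=\frac1q\int_X g\,d\mu .
\]
This lies within reach of the methods of \cite{BergelsonRichter2022} (and is already implicitly needed, for prime-power moduli, in \cite{Wang2025bams}): complete additivity of $\Omega$ gives $g(T^{\Omega(g_0m)}x)=g(T^{\Omega(m)}x')$ with $x'=T^{\Omega(g_0)}x$, so writing $n=g_0m$ with $g_0=\gcd(a,q)$ reduces the case $\gcd(a,q)>1$ to the smaller modulus $q/g_0$; by induction on $q$ only residues coprime to $q$ remain, and expanding the indicator of such a progression into Dirichlet characters modulo $q$ reduces matters to the non-correlation estimate $\frac1N\sum_{n\le N}\chi(n)g(T^{\Omega(n)}x)\to0$ for non-principal $\chi$, which the same circle of ideas supplies.

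Granting this, fix $z\ge1$ and split the sum over $e$ at $z$. The head $\sum_{e\le z}$ is finite, and applying the progressions statement to each of the finitely many residue classes yields
\[
\lim_{N\to\infty}\frac1N\sum_{e\le z}\boldsymbol{\mu}(e)\sum_{\substack{n\le N\\ e^k\mid f(n)}}g(T^{\Omega(n)}x)=\Big(\int_X g\,d\mu\Big)\sum_{e\le z}\frac{\boldsymbol{\mu}(e)\,\rho_f(e^k)}{e^k}.
\]
For the tail $\sum_{e>z}$ it suffices, by trivial estimation, to bound $\frac1N\sum_{e>z}\#\set{n\le N:e^k\mid f(n)}$. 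When $z<e\le N^{1/k}$ one has $\#\set{n\le N:e^k\mid f(n)}\le\rho_f(e^k)\of{N/e^k+1}$, whose sum over this range is $\le N\,\eta(z)+O_\ve\of{N^{1/k+\ve}}$ with $\eta(z)\colonequals\sum_{e>z}\rho_f(e^k)/e^k$; here $\eta(z)\to0$ as $z\to\infty$, the series converging because $k\ge2$, $\rho_f$ is multiplicative, and $\rho_f(p^k)$ is bounded at unramified primes. When $e>N^{1/k}$, an elementary splitting shows that any $n$ counted either has a divisor $e'\mid e$ with $N^{1/(2k)}<e'\le N^{1/k}$ and $e'^k\mid f(n)$ — a contribution that is again $\le N\,\eta(N^{1/(2k)})+O_\ve(N^{1/k+\ve})=o(N)$ — or satisfies $p^k\mid f(n)$ for some prime $p>N^{1/(2k)}$; the number of such $n$ is $O(N^{1-\delta})$ for some $\delta>0$. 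This last bound is precisely the arithmetic heart of the theorems of Heath-Brown \cite{HB2013} (for $f(x)=x^d+c$ when $k\ge(5d+3)/9$) and of Browning \cite{Browning2011} (for general irreducible $f$ of degree $d\ge3$ when $k\ge(3d+1)/4$), whose ranges of $k$ are exactly the hypotheses of Theorem~\ref{mainthm}.

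Combining the head and the tail, for each fixed $z$ we obtain
\[
\limsup_{N\to\infty}\abs{\frac1N\sum_{n\le N}\nu_k(f(n))\,g(T^{\Omega(n)}x)-\Big(\int_X g\,d\mu\Big)\sum_{e\le z}\frac{\boldsymbol{\mu}(e)\,\rho_f(e^k)}{e^k}}\le\|g\|_\infty\,\eta(z),
\]
since all remaining error terms are $o(N)$ as $N\to\infty$. Because $\boldsymbol{\mu}$ is supported on squarefree integers and $\rho_f$ is multiplicative, $\sum_{e\le z}\boldsymbol{\mu}(e)\rho_f(e^k)/e^k\to\prod_p\of{1-\rho_f(p^k)/p^k}$ when $z\to\infty$ (the product converges for $k\ge2$), and letting $z\to\infty$ in the displayed inequality gives \eqref{mainthm_eqn}. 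The two steps just isolated are the real obstacles: the tail estimate must hold throughout the \emph{entire} admissible range of $k$, and nothing short of Heath-Brown's square sieve and approximate determinant method (together with Browning's refinement) is known to achieve this; and the arithmetic-progressions form of Bergelson and Richter's theorem is needed in full, the genuinely dynamical point being the non-correlation of $n\mapsto g(T^{\Omega(n)}x)$ with Dirichlet — equivalently, additive — characters, which governs the residue classes coprime to the modulus.
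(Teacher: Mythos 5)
Your proposal is correct and follows essentially the same route as the paper: Möbius inversion $\nu_k(f(n))=\sum_{e^k\mid f(n)}\boldsymbol{\mu}(e)$, a split into small and large moduli, the arithmetic-progression form of Bergelson--Richter's theorem on the head, and the Heath-Brown/Browning estimates on the tail. The only differences are cosmetic: the progression statement you sketch is exactly Corollary~1.16 of \cite{BergelsonRichter2022}, which the paper simply cites, and the paper packages the large-moduli tail as a single quantity $E_f(N^{1-\delta},N)\ll N^{1-\Delta}$ verified on the cited pages of \cite{HB2013} and \cite{Browning2011}, rather than reducing to prime moduli as you do.
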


Taking $f(n)=n^2+1$ and $k=2$ in \eqref{mainthm_eqn}, we have $\rho_f(4)=0$ for $p=2$, $\rho_f(p^2)=2$ for $p\equiv 1\mod{4}$ and $\rho_f(p^2)=0$ for other odd $p$. By Theorem~\ref{mainthm}, we obtain the following variant of Bergelson and Richter's theorem related to Estermann's result \eqref{Estermann1931}.

\begin{corollary}
Let $(X,\mu, T)$ be a uniquely ergodic topological dynamical system. Then we have
	 	\begin{equation}
		\lim_{N\to\infty}\frac1N\sum_{1\leq n\leq N} \boldsymbol{\mu}^2(n^2+1) g(T^{\Omega(n)}x) = \prod_{p\equiv 1 \mod{4}}\Big(1-\frac{2}{p^2}\Big)\cdot \int_X g\,d\mu
	\end{equation}
for any $g\in C(X)$ and $x\in X$.

\end{corollary}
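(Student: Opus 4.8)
To establish \eqref{mainthm_eqn} — from which the corollary is the special case $f(x)=x^2+1$, $k=2$ of Theorem~\ref{mainthm} (note $2\ge(5\cdot2+3)/9$), once one records that $\rho_{x^2+1}(4)=0$ and $\rho_{x^2+1}(p^2)$ equals $2$ or $0$ according as $p\equiv1$ or $3\mod4$ — the plan is to detect the $k$-free condition by Möbius inversion and split the resulting sum at a slowly growing threshold. Writing $\nu_k(m)=\sum_{b^k\mid m}\boldsymbol{\mu}(b)$, and noting that $b^k\mid f(n)$ with $n\le N$ forces $b\ll N^{d/k}$ (there are $O(1)$ integers $n$ with $f(n)=0$, which we discard), one separates the sum over $b$ into a \emph{main part} over $b\le B$ and a \emph{tail} over $B<b\ll N^{d/k}$. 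The strategy is to prove that, for each fixed $B$, the main part tends as $N\to\infty$ to $\big(\sum_{b\le B}\boldsymbol{\mu}(b)\rho_f(b^k)b^{-k}\big)\int_X g\,d\mu$, while the tail is $o(1)$ uniformly in $N$ once $B$ is large; since $\sum_b\boldsymbol{\mu}(b)\rho_f(b^k)b^{-k}=\prod_p(1-\rho_f(p^k)p^{-k})$ converges absolutely (as $k\ge2$, using $\rho_f(p^k)\le d$), letting $B\to\infty$ then finishes the proof.

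For the main part, fix a squarefree $b\le B$. The set $\{n:b^k\mid f(n)\}$ is the union of $\rho_f(b^k)$ residue classes modulo $b^k$, so it suffices to prove the following arithmetic-progression version of Bergelson and Richter's theorem: for any $q\ge1$ and any residue $a\mod{q}$,
\[
\lim_{N\to\infty}\frac1N\sum_{\substack{n\le N\\ n\equiv a\,(q)}}g(T^{\Omega(n)}x)=\frac1q\int_X g\,d\mu.
\]
Summing this over the $\rho_f(b^k)$ classes gives $\rho_f(b^k)b^{-k}\int_X g\,d\mu$, and summing over $b\le B$ gives the claimed main term. The displayed limit is established in \cite{Wang2025bams} (where unions of residue classes are handled in the proof of \eqref{Wang2025bams}); it can also be obtained from \cite{BergelsonRichter2022} as follows. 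Put $D=\prod_{p\mid q}p^{\min(v_p(a),v_p(q))}$; then $n\equiv a\,(q)$ forces $D\mid n$, and writing $n=Dm$ one checks that $m$ runs over a single residue class $c$ modulo $q/D$ with $\gcd(c,q/D)=1$. Since $\Omega$ is completely additive, $\Omega(n)=\Omega(D)+\Omega(m)$, so
\[
\frac1N\sum_{\substack{n\le N\\ n\equiv a\,(q)}}g(T^{\Omega(n)}x)=\frac1D\cdot\frac{D}{N}\sum_{\substack{m\le N/D\\ m\equiv c\,(q/D)}}g\bigl(T^{\Omega(m)}\bigl(T^{\Omega(D)}x\bigr)\bigr),
\]
and the coprime-residue case of Bergelson–Richter's theorem (proved via Dirichlet characters: the twisted averages $\frac1M\sum_{m\le M}\chi(m)g(T^{\Omega(m)}y)$ vanish for $\chi\ne\chi_0$ and equal $\frac{\varphi(q/D)}{q/D}\int_X g\,d\mu+o(1)$ for $\chi=\chi_0$, exactly as in the analysis underlying \eqref{eqn_BR2022thmA}) gives the inner limit $\frac{D}{q}\int_X g\,d\mu$, hence the claim with a \emph{fixed} base point $T^{\Omega(D)}x$ (so no uniformity in $x$ is needed).

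For the tail, bound $\big|\tfrac1N\sum_{B<b\ll N^{d/k}}\boldsymbol{\mu}(b)\sum_{n\le N,\,b^k\mid f(n)}g(T^{\Omega(n)}x)\big|\le \|g\|_\infty\,\tfrac1N\sum_{B<b\ll N^{d/k}}\#\{n\le N:b^k\mid f(n)\}$ and split the range at $N^{\ve}$. For $B<b\le N^{\ve}$ the trivial estimate $\#\{n\le N:b^k\mid f(n)\}\le\rho_f(b^k)(N b^{-k}+1)$ together with $\rho_f(b^k)\ll_{\ve}b^{\ve}$ and $k\ge2$ gives a contribution $\ll\sum_{b>B}\rho_f(b^k)b^{-k}+o(1)$, which is $\le\ve_B$ with $\ve_B\to0$ as $B\to\infty$. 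For $N^{\ve}<b\ll N^{d/k}$ one invokes the power-free sieve bounds of Heath-Brown \cite{HB2013} when $f=x^d+c$ (valid for $k\ge(5d+3)/9$) and of Browning \cite{Browning2011} when $f$ is a general irreducible polynomial of degree $d\ge3$ (valid for $k\ge(3d+1)/4$), together with Xiao's treatment \cite{Xiao2017}: these results establish precisely that $\sum_{N^{\ve}<b}\#\{n\le N:b^k\mid f(n)\}=o(N)$ under the stated hypotheses, since this is the content of their proofs of \eqref{HB2013} and \eqref{Browning2011}. Combining the two ranges, the tail is $\le\ve_B+o_{N\to\infty}(1)$; taking $N\to\infty$ and then $B\to\infty$ completes the proof.

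The step quoting the square-sieve and approximate-determinant bounds of Heath-Brown and Browning in the range $N^{\ve}<b\ll N^{d/k}$ is the crux: it is exactly here that the arithmetic of $f$ enters and where the hypotheses $k\ge(5d+3)/9$ and $k\ge(3d+1)/4$ are forced, everything else being a soft dynamical argument. A secondary point to be careful about is the dichotomy in the main-term reduction (whether the forced divisibility $D\mid n$ leaves a coprime residue class for $m$), but this is elementary and is dispatched by the $p$-adic valuation bookkeeping sketched above.
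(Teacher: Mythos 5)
Your proposal is correct and follows essentially the same route as the paper: the corollary is deduced from Theorem~\ref{mainthm} with $f=x^2+1$, $k=2$ via the same computation of $\rho_f(4)$ and $\rho_f(p^2)$, and your proof of the theorem itself (M\"obius inversion, a fixed cutoff for the main term handled by the arithmetic-progression form of Bergelson--Richter's theorem, and the tail controlled by the Heath-Brown/Browning estimates) mirrors Proposition~\ref{keyprop} and Theorem~\ref{thm_general}. The only cosmetic difference is that you re-derive the non-coprime residue-class case of \eqref{eqn_maincor_BR} from the coprime case via Dirichlet characters, whereas the paper simply cites \cite[Corollary~1.16]{BergelsonRichter2022}, which already covers arbitrary residues.
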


In \cite{HB2013, Browning2011}, to prove \eqref{HB2013} and \eqref{Browning2011}, Heath-Brown and Browning estimated the number of integers $n$ such that $d^k\mid f(n)$ for large moduli $d$. These help us prove Theorem~\ref{mainthm} in an elementary way in the following two sections. In Section~\ref{sec_reducible}, we will show that \eqref{conjecture} also holds for a kind of reducible polynomials considered by Booker and Browning in \cite{BookerBrowning2016}. In Section~\ref{sec_ABC}, under the assumption that the ABC conjecture is true, we will use Murty and Pasten’s results in \cite{MurtyPasten2014} to obtain \eqref{conjecture}  for general polynomials.

\section{A preliminary estimation}\label{sec_pre}

Let $f(x)\in \Z[x]$ be an irreducible polynomial of degree $d\ge 1$. To estimate the partial summation in \eqref{conjecture}, we use the following fact concerning $k$-free numbers
\[
\sum_{d^k\mid f(n)}\boldsymbol{\mu}(d) = 
\begin{cases}
	1, & \text{if } f(n) \text{ is } k\text{-free},\\
	0, & \text{otherwise}.
\end{cases}
\]
Let  $a\colon \N\to \C$ be a bounded arithmetic function. Then we divide the following summation into two parts
	\begin{align}
		& \quad\sum_{1\leq n\leq N}\nu_k(f(n))a(n) = \sum_{1\leq n\leq N} a(n) \sum_{d^k\mid f(n)} \boldsymbol{\mu}(d) = \sum_{d}\boldsymbol{\mu}(d) \sum_{\substack{1\leq n\leq N \\ d^k\mid f(n)}} a(n) \nonumber\\
		&= \sum_{d \leq Y }\boldsymbol{\mu}(d) \sum_{\substack{1\leq n\leq N \\ d^k\mid f(n)}} a(n) + \sum_{d > Y}\boldsymbol{\mu}(d) \sum_{\substack{1\leq n\leq N \\ d^k\mid f(n)}} a(n) \colonequals S_1+S_2, \label{eqn_summation}
	\end{align}
where $1\leq Y \leq |f(N)|^{1/k}$ is a parameter to be determined according to the polynomials we consider. 

In this section, we will give an estimation for $S_1$ with small modulus $d$, which builds a connection between the partial summation in \eqref{conjecture} and the partial summation of $a(n)$. In the next section, we will show that \eqref{conjecture} holds if the part $S_2$ with large modulus is small.  If $B$ is a finite nonempty set, we define 
$$\BEu{x\in B}b(x)\colonequals\frac1{|B|}\sum_{x\in B}b(x)$$
for any function $b\colon B\to\C$ on $B$.

\begin{proposition}\label{keyprop}

Let $k\ge2$ be an integer. Let $f(x)\in \Z[x]$ be an irreducible polynomial of degree $d\ge 1$. Let  $a\colon \N\to \C$ be a bounded arithmetic function. Then for any $\ve>0$ and $1\leq D\leq Y \leq |f(N)|^{1/k}$, we have
\begin{equation}\label{keyprop_eqn}
	\sum_{1\leq n\leq N}\nu_k(f(n))a(n) = N\sum_{d \leq D } \frac{\boldsymbol{\mu}(d)}{d^k} \sum_{\substack{\nu \mod{d^k} \\ f(\nu) \equiv 0 \mod{d^k}} } \BEu{\substack{ 1\leq n\leq N \\ n \equiv \nu \mod{d^k} }} a(n) +  O(ND^{-k+1+\ve}+Y^{1+\ve})+O\big(E_f(Y,N)\big), 
\end{equation}
where
\begin{equation}\label{dfn_E_f}
	E_f(Y,N)=\sum_{\substack{d>Y\\ \boldsymbol{\mu}^2(d)=1}} \sum_{\substack{1\leq n\leq N \\ d^k\mid f(n)}} 1.
\end{equation}
The implied constants in \eqref{keyprop_eqn} depend only on $\ve$, $f$ and the supnorm of $a(n)$.
\end{proposition}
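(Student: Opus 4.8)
The plan is to work directly from the decomposition $\sum_{1\leq n\leq N}\nu_k(f(n))a(n)=S_1+S_2$ already recorded in \eqref{eqn_summation}, extracting the main term of \eqref{keyprop_eqn} from $S_1$ and absorbing everything else into error terms. First I would dispose of $S_2$ trivially: since $\boldsymbol{\mu}$ is supported on squarefree integers and $|a(n)|\leq\|a\|_\infty$ for all $n$, we get $|S_2|\leq\|a\|_\infty\sum_{d>Y,\,\boldsymbol{\mu}^2(d)=1}\#\{1\leq n\leq N:d^k\mid f(n)\}=\|a\|_\infty E_f(Y,N)$, which accounts for the last term in \eqref{keyprop_eqn}.

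Next, for $S_1=\sum_{d\leq Y}\boldsymbol{\mu}(d)\sum_{1\leq n\leq N,\,d^k\mid f(n)}a(n)$, I would use that for each (squarefree) $d$ the condition $d^k\mid f(n)$ depends only on $n\bmod d^k$, so by \eqref{dfn_rho} the inner sum equals $\sum_{\nu\mod{d^k},\,f(\nu)\equiv0\mod{d^k}}c(\nu,d)\,\BEu{\substack{1\leq n\leq N\\ n\equiv\nu\mod{d^k}}}a(n)$, where $c(\nu,d):=\#\{1\leq n\leq N:n\equiv\nu\mod{d^k}\}$, with the convention that the average is $0$ on an empty congruence class (which forces $d^k>N$). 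Since $|c(\nu,d)-N/d^k|\leq1$ always and each average is bounded by $\|a\|_\infty$, replacing $c(\nu,d)$ by $N/d^k$ throughout $S_1$ costs at most $\|a\|_\infty\sum_{d\leq Y}\rho_f(d^k)$. At this point I would invoke the standard divisor-type bound $\rho_f(d^k)\ll_{f,\ve}d^{\ve}$, valid for squarefree $d$ uniformly in $k$ — it follows from the multiplicativity of $\rho_f$, the Hensel-lifting identity $\rho_f(p^k)=\rho_f(p)\leq\deg f$ for all but finitely many primes $p$, and the bound $\rho_f(p^k)=O_f(1)$ uniformly in $k$ for the finitely many remaining primes (boundedness of $v_p(f(\nu))$ near the simple $p$-adic roots of $f$). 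This produces the error $O_{f,\ve}(\|a\|_\infty Y^{1+\ve})$ and leaves the clean quantity $N\sum_{d\leq Y}\frac{\boldsymbol{\mu}(d)}{d^k}\sum_{\nu\mod{d^k},\,f(\nu)\equiv0\mod{d^k}}\BEu{\substack{1\leq n\leq N\\ n\equiv\nu\mod{d^k}}}a(n)$.

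Finally I would truncate this from $d\leq Y$ down to $d\leq D$: using $\rho_f(d^k)\ll_{f,\ve}d^{\ve}$ and the bound $\|a\|_\infty$ on each average once more, the tail over $D<d\leq Y$ is $\ll_{f,\ve}N\|a\|_\infty\sum_{d>D}\rho_f(d^k)/d^k\ll N\|a\|_\infty\sum_{d>D}d^{\ve-k}\ll N\|a\|_\infty D^{1-k+\ve}$, where both the convergence of the series and the final estimate use $k\geq2$. Collecting the three contributions $O(ND^{-k+1+\ve})$, $O(Y^{1+\ve})$ and $O(E_f(Y,N))$ yields \eqref{keyprop_eqn}, with implied constants depending only on $\ve$, $f$ and $\|a\|_\infty$. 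The argument is essentially bookkeeping, so I do not expect a genuine obstacle; the one input beyond elementary manipulations is the uniform divisor-type bound for $\rho_f(d^k)$ (which is also the only place where the hypothesis $k\geq2$ is used, ensuring the tail series converges), so that is the point to state carefully. The constraint $Y\leq|f(N)|^{1/k}$ is merely a harmless normalization: for $d^k>|f(N)|$ the condition $d^k\mid f(n)$ with $1\leq n\leq N$ would force $f(n)=0$ and contributes only $O(1)$.
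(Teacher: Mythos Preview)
Your proposal is correct and follows essentially the same route as the paper: the identical decomposition $S_1+S_2$ from \eqref{eqn_summation}, the trivial bound $|S_2|\ll E_f(Y,N)$, the rewriting of $S_1$ over residue classes with count $N/d^k+O(1)$, the divisor-type estimate $\rho_f(d^k)\ll d^{\ve}$ for squarefree $d$, and the final truncation from $Y$ down to $D$. The only cosmetic differences are that the paper cites Nagell for the bound on $\rho_f(d^k)$ whereas you sketch it via Hensel lifting, and you are slightly more explicit about empty congruence classes and the role of the constraint $Y\leq|f(N)|^{1/k}$.
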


\begin{proof}

By \eqref{eqn_summation}, we have 
\begin{equation}
	\sum_{1\leq n\leq N}\nu_k(f(n))a(n) = S_1+S_2,
	\end{equation}
where
\begin{equation}
	S_1=\sum_{d \leq Y }\boldsymbol{\mu}(d) \sum_{\substack{1\leq n\leq N \\ d^k\mid f(n)}} a(n)\quad \text{ and }\quad   S_2=\sum_{d > Y}\boldsymbol{\mu}(d) \sum_{\substack{1\leq n\leq N \\ d^k\mid f(n)}} a(n).
\end{equation}

For $S_1$, we have
\begin{align}
	S_1&= \sum_{d \leq Y }\boldsymbol{\mu}(d) \sum_{\substack{\nu \mod{d^k} \\ f(\nu) \equiv 0 \mod{d^k}} }  \sum_{\substack{ 1\leq n\leq N \\ n \equiv \nu \mod{d^k} }} a(n)\nonumber\\
	&= \sum_{d \leq Y }\boldsymbol{\mu}(d) \sum_{\substack{\nu \mod{d^k} \\ f(\nu) \equiv 0 \mod{d^k}} }  \Big(\frac{N}{d^k}+O(1)\Big)\BEu{\substack{ 1\leq n\leq N \\ n \equiv \nu \mod{d^k} }} a(n)  \nonumber\\
	&= N \sum_{d \leq Y } \frac{\boldsymbol{\mu}(d)}{d^k} \sum_{\substack{\nu \mod{d^k} \\ f(\nu) \equiv 0 \mod{d^k}} } \BEu{\substack{ 1\leq n\leq N \\ n \equiv \nu \mod{d^k} }} a(n) + O\of{\sum_{d\leq Y} |\boldsymbol{\mu}(d)|\rho_f(d^k)}.\label{S1_1}
\end{align}

By the Chinese remainder theorem, $\rho_f(q)$ is a multiplicative function. By \cite[Theorem~54]{Nagell1964}, we have $\rho_f(d^k)\ll d^\ve$ for any squarefree $d\in\N$. This implies that
 \begin{equation}\label{S1_2}
 	\sum_{d\leq Y} |\boldsymbol{\mu}(d)|\rho_f(d^k) \ll Y^{1+\ve}.
 \end{equation}
 Here and thereafter mentioned implied constants in the estimates involving $\ve$ depend on $\ve$. 
  
 For any $1\leq D\leq Y$, by $|\boldsymbol{\mu}(d)|\rho_f(d^k)\ll d^\ve$  we have 
 \begin{equation}\label{S1_3}
 	\Big|\sum_{D< d \leq Y } \frac{\boldsymbol{\mu}(d)}{d^k} \sum_{\substack{\nu \mod{d^k} \\ f(\nu) \equiv 0 \mod{d^k}} } \BEu{\substack{ 1\leq n\leq N \\ n \equiv \nu \mod{d^k} }} a(n) \Big| \ll  \sum_{d>D} \frac{|\boldsymbol{\mu}(d)|\rho(d^k)}{d^k} \ll \sum_{d>D} \frac{1}{d^{k-\ve}} \ll D^{-k+1+\ve}.
 \end{equation}
 
Then combining \eqref{S1_1}-\eqref{S1_3} gives us that 
 \begin{equation}\label{S1}
 	S_1=N \sum_{d \leq D } \frac{\boldsymbol{\mu}(d)}{d^k} \sum_{\substack{\nu \mod{d^k} \\ f(\nu) \equiv 0 \mod{d^k}} } \BEu{\substack{ 1\leq n\leq N \\ n \equiv \nu \mod{d^k} }} a(n) + O(ND^{-k+1+\ve}+Y^{1+\ve}).
 \end{equation}
 
For $S_2$, we use the following trivial inequality 
 \begin{equation}\label{S2}
 	|S_2|\ll \sum_{\substack{d>Y\\ \boldsymbol{\mu}^2(d)=1}} \sum_{\substack{1\leq n\leq N \\ d^k\mid f(n)}} 1 = E_f(Y,N).
 \end{equation}
Thus, \eqref{keyprop_eqn} follows by \eqref{S1} and \eqref{S2}.
\end{proof}

\section{Proof of Theorem~\ref{mainthm}} \label{sec_proof}

In this section, we will establish \eqref{conjecture} if the term $E_f(Y,N)$ defined in \eqref{dfn_E_f} is small. To prove \eqref{conjecture}, we cite the following dynamical generalization of the PNT for arithmetic progressions showed by Bergelson and Richter in \cite{BergelsonRichter2022}.

\begin{theorem}[{\cite[Corollary~1.16]{BergelsonRichter2022}}]\label{thm_BR_cor_1.16}
	Let $(X, \mu, T)$ be a uniquely ergodic topological dynamical system. Then we have
\begin{equation}\label{eqn_maincor_BR} 
	\lim_{N\to\infty}\frac1N \sum_{1\leq n \leq N} g(T^{\Omega(mn+r)}x)= \int_X g \,d\mu 
\end{equation}
for any $g\in C(X)$, $x\in X$, $m\in \N$ and $r\in \set{0,1\dots,m-1}$.
\end{theorem}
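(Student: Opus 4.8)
The plan is to deduce Theorem~\ref{thm_BR_cor_1.16} from the Bergelson--Richter theorem \eqref{eqn_BR2022thmA} together with the complete additivity of $\Omega$, the $T$-invariance of $\mu$, and orthogonality of Dirichlet characters. \textbf{First I would reduce to the coprime case.} Set $e=\gcd(r,m)$ and write $r=er'$, $m=em'$ with $\gcd(r',m')=1$. Then $mn+r=e(m'n+r')$, and since $\Omega$ is completely additive,
\[
g(T^{\Omega(mn+r)}x)=\tilde g\bigl(T^{\Omega(m'n+r')}x\bigr),\qquad \tilde g\colonequals g\circ T^{\Omega(e)}\in C(X).
\]
As $\mu$ is the unique $T$-invariant measure, $\int_X \tilde g\,d\mu=\int_X g\,d\mu$, so the statement for $(m,r)$ follows from that for $(m',r')$. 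In particular, when $r=0$ one has $m'=1$ and the claim is immediate from \eqref{eqn_BR2022thmA} applied to $\tilde g$. Hence I may assume $\gcd(r,m)=1$ with $1\le r<m$.

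\textbf{Next I would run a character expansion and isolate the main term.} Reindexing by $\ell=mn+r$ turns the average into $\frac1N\sum_{\ell\le M,\,\ell\equiv r\,(m)}g(T^{\Omega(\ell)}x)+O(1)$, where $M=mN$. Since $\gcd(r,m)=1$ forces $\gcd(\ell,m)=1$ on this progression, orthogonality of the Dirichlet characters mod $m$ gives
\[
\sum_{\substack{\ell\le M\\ \ell\equiv r\,(m)}}g(T^{\Omega(\ell)}x)=\frac1{\phi(m)}\sum_{\chi\bmod m}\overline{\chi}(r)\sum_{\ell\le M}\chi(\ell)\,g(T^{\Omega(\ell)}x).
\]
For the principal character I would write $\chi_0(\ell)=\sum_{e\mid\gcd(\ell,m)}\boldsymbol{\mu}(e)$ and sum over $e\mid m$; setting $\ell=e\ell'$ and using complete additivity and invariance exactly as in the reduction, each inner sum $\sum_{\ell\le M,\,e\mid\ell}g(T^{\Omega(\ell)}x)$ is asymptotic to $\tfrac{M}{e}\int_X g\,d\mu$ by \eqref{eqn_BR2022thmA}. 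Thus the $\chi_0$-contribution equals $\bigl(\sum_{e\mid m}\boldsymbol{\mu}(e)/e\bigr)M\int_X g\,d\mu+o(M)=\tfrac{\phi(m)}{m}M\int_X g\,d\mu+o(M)$, and since $\overline{\chi_0}(r)=1$ the progression sum is $N\int_X g\,d\mu$ plus the non-principal contributions. Dividing by $N$ then yields the claimed limit, provided the non-principal characters contribute $o(M)$.

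\textbf{The hard part} is precisely this last input: for each non-principal $\chi\bmod m$ one must show
\[
\sum_{\ell\le M}\chi(\ell)\,g(T^{\Omega(\ell)}x)=o(M).
\]
This is the dynamical analogue of the prime number theorem in arithmetic progressions --- for $(X,T)$ a two-point rotation it reads $\sum_{\ell\le M}\chi(\ell)\lambda(\ell)=o(M)$, equivalent to the non-vanishing of $L(s,\chi)$ on $\Re s=1$. The obstruction is that $\chi(\ell)$ depends on the full prime factorization of $\ell$ rather than on $\Omega(\ell)$ alone, so it cannot be absorbed into the additive action $T^{\Omega(\cdot)}$ and the bare theorem \eqref{eqn_BR2022thmA} does not apply directly. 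I would obtain this cancellation from the general disjointness form of Bergelson--Richter's results, which asserts that a completely multiplicative unimodular weight such as $\chi$ is asymptotically orthogonal to the sequence $\ell\mapsto g(T^{\Omega(\ell)}x)$; alternatively one can run a Hal\'asz/Dirichlet-series argument on the twisted sum. Granting this estimate, the two reduction steps above complete the proof.
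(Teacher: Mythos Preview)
The paper does not prove this statement; it is quoted as \cite[Corollary~1.16]{BergelsonRichter2022} and used as a black box. There is therefore no paper proof to compare against, and I assess your argument on its own.

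Your reduction to the coprime case via complete additivity of $\Omega$ and your computation of the principal-character contribution are both correct. The gap is exactly where you place it: for non-principal $\chi\bmod m$ the estimate $\sum_{\ell\le M}\chi(\ell)\,g(T^{\Omega(\ell)}x)=o(M)$ is \emph{not} a consequence of \eqref{eqn_BR2022thmA}. Indeed, by orthogonality this family of estimates (over all non-principal $\chi$), together with your principal-character calculation, is \emph{equivalent} to the coprime case of Theorem~\ref{thm_BR_cor_1.16}; you have restated the problem, not reduced it. A concrete obstruction: take $(X,T)$ to be rotation on two points. Then \eqref{eqn_BR2022thmA} is the prime number theorem $\sum_{n\le N}\lambda(n)=o(N)$, while your required twist estimate becomes $\sum_{n\le N}\chi(n)\lambda(n)=o(N)$, i.e.\ the prime number theorem in arithmetic progressions, which genuinely needs $L(1,\chi)\ne0$ --- input that \eqref{eqn_BR2022thmA} alone cannot supply.

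Your two proposed fixes do not close the gap. Invoking ``the general disjointness form of Bergelson--Richter's results'' is circular unless you name a statement in \cite{BergelsonRichter2022} logically prior to Corollary~1.16 that already yields the twist estimate; at that point you are citing a different black box from the same source and might as well cite Corollary~1.16 directly. ``Running a Hal\'asz/Dirichlet-series argument'' amounts to reproving the substantive content of \cite{BergelsonRichter2022} rather than deducing the corollary from \eqref{eqn_BR2022thmA}. In short, Theorem~\ref{thm_BR_cor_1.16} is strictly stronger than \eqref{eqn_BR2022thmA}, and a valid proof must import that extra strength rather than bootstrap.
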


Now, we show the following theorem, which will imply Theorem~\ref{mainthm} by the work of Heath-Brown \cite{HB2013} and Browning \cite{Browning2011}.

\begin{theorem}\label{thm_general}
Let $k\ge2$ be an integer. Let $f(x)\in \Z[x]$ be an irreducible polynomial of degree $d\ge 1$. Let $(X,\mu, T)$ be a uniquely ergodic topological dynamical system. Let $E_f(Y,N)$ be defined in \eqref{dfn_E_f}. If there exist some positive constants $\delta, \Delta>0$ such that 
\begin{equation}\label{thm_general_cond}
	E_f(N^{1-\delta},N)\ll N^{1-\Delta},
\end{equation}
then we have
	 	\begin{equation}\label{thm_general_eqn}
		\lim_{N\to\infty}\frac1N\sum_{1\leq n\leq N} \nu_k(f(n)) g(T^{\Omega(n)}x) = \prod_p\Big(1-\frac{\rho_f(p^k)}{p^k}\Big)\cdot \int_X g\,d\mu
	\end{equation}
for any $g\in C(X)$ and $x\in X$, where $\rho_f(p^k)$ is defined in \eqref{dfn_rho}.
\end{theorem}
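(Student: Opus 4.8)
The plan is to apply Proposition~\ref{keyprop} to the bounded arithmetic function $a(n)=g(T^{\Omega(n)}x)$, to evaluate the resulting main term by means of the Bergelson--Richter theorem for arithmetic progressions (Theorem~\ref{thm_BR_cor_1.16}), and then to let $N$ and the auxiliary parameter $D$ tend to infinity in the correct order, with the hypothesis \eqref{thm_general_cond} controlling the large‑modulus term $E_f$. Fix $g\in C(X)$ and $x\in X$, and fix $\ve>0$ small enough that $\ve<k-1$ and $\ve(1-\delta)<\delta$ (shrinking $\delta$ first if necessary so that $0<\delta<1$; this is harmless since $E_f(Y,N)$ is nonincreasing in $Y$). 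Let $D\ge1$ be large but fixed, and take $Y=N^{1-\delta}$ when this does not exceed $|f(N)|^{1/k}$, and $Y=|f(N)|^{1/k}$ otherwise — in the latter case $E_f(Y,N)=0$ for all large $N$, since $d^k\mid f(n)$ with $1\le n\le N$ and $f(n)\neq0$ forces $d\le|f(N)|^{1/k}$. With these choices Proposition~\ref{keyprop} gives, after dividing by $N$,
\begin{equation}\label{plan_eq1}
	\frac1N\sum_{1\leq n\leq N} \nu_k(f(n)) g(T^{\Omega(n)}x) = \sum_{d \leq D } \frac{\boldsymbol{\mu}(d)}{d^k} \sum_{\substack{\nu \mod{d^k} \\ f(\nu) \equiv 0 \mod{d^k}} } \BEu{\substack{ 1\leq n\leq N \\ n \equiv \nu \mod{d^k} }} g(T^{\Omega(n)}x) + O\of{D^{-k+1+\ve}} + O\of{N^{\ve(1-\delta)-\delta}} + O\of{N^{-\Delta}},
\end{equation}
where the last error term comes from \eqref{thm_general_cond}, and where it is essential that the implied constant in $O(D^{-k+1+\ve})$ does not depend on $N$.

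For fixed $D$, I would next let $N\to\infty$. The sum on the right of \eqref{plan_eq1} contains only finitely many inner averages, one for each of the $\sum_{d\le D}\rho_f(d^k)<\infty$ pairs $(d,\nu)$ with $f(\nu)\equiv0\mod{d^k}$. For each such pair, set $m=d^k$ and let $r\in\set{0,1,\dots,m-1}$ be the reduction of $\nu$ modulo $m$; then the condition $1\le n\le N$, $n\equiv\nu\mod{m}$ is the same as $n=mj+r$ with $j$ running over an interval of length $N/m+O(1)$, so
\[
	\BEu{\substack{ 1\leq n\leq N \\ n \equiv \nu \mod{m} }} g(T^{\Omega(n)}x)=\frac{1}{N/m+O(1)}\sum_{j} g\of{T^{\Omega(mj+r)}x}\longrightarrow \int_X g\,d\mu \qquad (N\to\infty)
\]
by Theorem~\ref{thm_BR_cor_1.16} (the $O(1)$ discrepancy in the number of terms, and the single term with $mj+r=0$ when $r=0$, contribute $o(1)$). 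Since \eqref{plan_eq1} involves only finitely many such averages, passing to the limit $N\to\infty$ yields
\[
	\limsup_{N\to\infty}\abs{\frac1N\sum_{1\leq n\leq N} \nu_k(f(n)) g(T^{\Omega(n)}x)-\of{\sum_{d\le D}\frac{\boldsymbol{\mu}(d)\rho_f(d^k)}{d^k}}\int_X g\,d\mu}\ll D^{-k+1+\ve}.
\]

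Finally, I would let $D\to\infty$. Since $\rho_f$ is multiplicative and $\rho_f(d^k)\ll_\ve d^\ve$ for squarefree $d$ (as used in the proof of Proposition~\ref{keyprop}), and since $k\ge2$, the series $\sum_{d\ge1}\boldsymbol{\mu}(d)\rho_f(d^k)/d^k$ converges absolutely, and by multiplicativity it factors as the Euler product $\prod_p\bigl(1-\rho_f(p^k)/p^k\bigr)$ (the local factor at $p$ is $1-\rho_f(p^k)/p^k$, all higher prime‑power contributions vanishing because $\boldsymbol{\mu}$ is supported on squarefree integers); moreover $D^{-k+1+\ve}\to0$. Letting $D\to\infty$ in the previous display then gives \eqref{thm_general_eqn}, and since $g$ and $x$ were arbitrary this completes the proof.

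The only genuinely delicate step is the interchange of the two limiting processes: one must send $N\to\infty$ first, with $D$ held fixed, and this is legitimate precisely because the truncation error $O(D^{-k+1+\ve})$ furnished by Proposition~\ref{keyprop} is uniform in $N$; the subsequent limit $D\to\infty$ is then immediate. Everything else is routine bookkeeping — matching the residue‑class averages to the exact shape required by Theorem~\ref{thm_BR_cor_1.16}, checking the admissibility of the choice of $Y$ for large $N$, and identifying the arising Dirichlet series with its Euler product.
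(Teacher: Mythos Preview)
Your proof is correct and follows essentially the same route as the paper: apply Proposition~\ref{keyprop} with $a(n)=g(T^{\Omega(n)}x)$ and $Y=N^{1-\delta}$, invoke Theorem~\ref{thm_BR_cor_1.16} on the finitely many residue classes after fixing $D$, then let $N\to\infty$ followed by $D\to\infty$. The paper sets $\ve=\delta$ directly rather than choosing a separate small $\ve$, and omits your explicit handling of the constraint $Y\le|f(N)|^{1/k}$ and the Euler product identification, but these are cosmetic differences rather than alternative strategies.
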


\begin{proof} 

Put $a(n)=g(T^{\Omega(n)}x)$ and $\alpha=\int_X g\,d\mu$ in \eqref{thm_general_eqn}. Then by Theorem~\ref{thm_BR_cor_1.16}, we have
\begin{equation}\label{eqn_BR_pntap}
	\lim_{N\to\infty}\frac1N \sum_{1\leq n \leq N} a(mn+r)=\alpha
\end{equation}
for any $m\in \N$ and $r\in \set{0,1\dots,m-1}$. Clearly, $a(n)$ is bounded. We may assume that  $|a(n)|\leq 1$ for all $n$. 

By Proposition~\ref{keyprop}, taking $Y=N^{1-\delta}$ and $\ve=\delta$ in \eqref{keyprop_eqn} we get that
\begin{equation}\label{thm_general_pf1}
		\frac1N\sum_{1\leq n\leq N}\nu_k(f(n))a(n) = \sum_{d \leq D } \frac{\boldsymbol{\mu}(d)}{d^k} \sum_{\substack{\nu \mod{d^k} \\ f(\nu) \equiv 0 \mod{d^k}} } \BEu{\substack{ 1\leq n\leq N \\ n \equiv \nu \mod{d^k} }} a(n) +  O(D^{-k+1+\delta})+O(N^{-\delta^2}+N^{-\Delta}), 
\end{equation}
for any $1\leq D\leq N^{1-\delta}$. 

We fix $D$ first. By \eqref{eqn_BR_pntap}, for any $1\leq d\leq D$  we have
\begin{equation}
	\lim_{N\to\infty} \BEu{\substack{ 1\leq n\leq N \\ n \equiv \nu \mod{d^k} }} a(n) =\alpha.
\end{equation}
Taking $N\to\infty$ in \eqref{thm_general_pf1} gives 
\begin{equation} \label{thm_general_pf2}
		\lim_{N\to\infty}\frac1N\sum_{1\leq n\leq N}\nu_k(f(n))a(n) = \alpha\cdot\sum_{d \leq D } \frac{\boldsymbol{\mu}(d)\rho_f(d^k)}{d^k}  +  O(D^{-k+1+\delta}). 
\end{equation}
Then \eqref{thm_general_eqn} follows by taking $D\to\infty$ in \eqref{thm_general_pf2}. This completes the proof of Theorem~\ref{thm_general}.
\end{proof}

\begin{proof}[Proof of Theorem~\ref{mainthm}]
If $f(x)=x^d+c$ with $k \geq (5d + 3)/9$ and $c\in\Z$, then by \cite[p. 180]{HB2013} \eqref{thm_general_cond} holds. So \eqref{mainthm_eqn} follows by Theorem~\ref{thm_general}. If $f(x)\in \Z[x]$  is an irreducible polynomial of degree $d\ge3$ and $k \geq 3d/4 + 1/4$, then by \cite[p. 145]{Browning2011} \eqref{thm_general_cond} still holds, and \eqref{mainthm_eqn} follows by Theorem~\ref{thm_general} again. 	
\end{proof}

\section{A result on reducible polynomials}\label{sec_reducible}

In this section, we consider a reducible integral polynomial studied by Booker and Browning in \cite{BookerBrowning2016}. Let $f\in\Z[x]$ be a non-constant squarefree polynomial. Assume that $f$ has no fixed prime divisor, by which we mean that there is no prime $p$ such that $p |f(n)$ for all $n\in \Z$. Assume that  each irreducible factor of $f$ has degree at most $3$. In 2016, Booker and Browning \cite[Theorem~1.2]{BookerBrowning2016} derived the following asymptotic formula for integers $n$ such that $f(n)$ is squarefree:
\begin{equation}\label{BookerBrowning2016}
	\sum_{\substack{n\leq N}} \boldsymbol{\mu}^2(f(n))  =N \prod_p
\left(1-\frac{\rho_f(p^2)}{p^2}\right) +O\!\left(\frac{N}{\log N}\right),
\end{equation}
where $\rho_f(q)$ is defined in \eqref{dfn_rho}.

To establish \eqref{BookerBrowning2016}, they \cite[(1)]{BookerBrowning2016} adapted the work of Hooley \cite[Chapter 4]{Hooley1976} and  Reuss \cite{Reuss2015blms}, showing that
\begin{equation}\label{BB_err}
	\#\set{n\leq N\colon \exists \, p> N^\delta \text{ s.t. } p^2\mid f(n)} = O(N^{1-\eta})
\end{equation}
for some $0<\delta<1/11$ and $\eta>0$. Using \eqref{BB_err} and the work of Hooley \cite[Chapter 4]{Hooley1976} (see also \cite{Hooley1967}), we will show that \eqref{conjecture} holds for such $f(n)$ and $k=2$.

\begin{theorem}\label{thm_BR_BB}
 Let $f\in\Z[x]$ be a non-constant squarefree polynomial with no fixed prime divisor. 	Assume that  each irreducible factor of $f$ has degree at most $3$. Let $(X,\mu, T)$ be a uniquely ergodic topological dynamical system, then we have
	 	\begin{equation}\label{thm_BR_BB_eqn}
		\lim_{N\to\infty}\frac1N\sum_{1\leq n\leq N} \boldsymbol{\mu}^2(f(n)) g(T^{\Omega(n)}x) = \prod_p\Big(1-\frac{\rho_f(p^2)}{p^2}\Big)\cdot \int_X g\,d\mu
	\end{equation}
for any $g\in C(X)$ and $x\in X$.

\end{theorem}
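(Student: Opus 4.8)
The plan is to combine the square-sieve identity underlying Theorem~\ref{thm_general} with Hooley's method for square-free values of polynomials \cite[Chapter~4]{Hooley1976} (see also \cite{Hooley1967}), the only genuinely new ingredient being the Booker--Browning estimate \eqref{BB_err}. Put $a(n)=g(T^{\Omega(n)}x)$ and $\alpha=\int_X g\,d\mu$; after dividing $g$ by its sup-norm we may assume $\abs{a(n)}\le1$ for all $n$ (the case $g\equiv0$ being trivial). By Theorem~\ref{thm_BR_cor_1.16} we have $\frac1N\sum_{1\le n\le N}a(mn+r)\to\alpha$ for every $m\in\N$ and $r\in\set{0,1,\dots,m-1}$, and hence, for every fixed $q\in\N$ and residue $\nu$,
\[
\frac1N\sum_{\substack{1\le n\le N\\ n\equiv\nu\mod{q}}}a(n)\ \longrightarrow\ \frac{\alpha}{q}\qquad(N\to\infty).
\]
Since $f$ is non-constant, $f(n)=0$ for at most $\deg f$ values of $n$, which contribute $O(1)$ and may be discarded throughout.

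Fix a parameter $z\ge2$, to be sent to infinity only at the very end, and let $P(z)=\prod_{p\le z}p$. For $f(n)\ne0$, square-freeness of $f(n)$ is detected by $\boldsymbol{\mu}^2(f(n))=\prod_p\bigl(1-\mathbf{1}_{\{p^2\mid f(n)\}}\bigr)$, and comparing with the truncated product gives the pointwise bound
\[
0\ \le\ \prod_{p\le z}\bigl(1-\mathbf{1}_{\{p^2\mid f(n)\}}\bigr)-\boldsymbol{\mu}^2(f(n))\ \le\ \mathbf{1}_{\{\exists\,p>z\,:\,p^2\mid f(n)\}}.
\]
Thus replacing $\boldsymbol{\mu}^2(f(n))$ by its $z$-truncation in $\frac1N\sum_{1\le n\le N}\boldsymbol{\mu}^2(f(n))a(n)$ costs at most $\frac1N\#\set{n\le N\colon\exists\,p>z,\ p^2\mid f(n)}$. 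To bound this I would split the primes $p>z$ into the ranges $z<p\le N^{\delta}$ and $p>N^{\delta}$, with $\delta\in(0,1/11)$ and $\eta>0$ as in \eqref{BB_err}. For $N$ large, $N^{\delta}<\sqrt N$, so for each $p$ in the first range $\#\set{n\le N\colon p^2\mid f(n)}=\rho_f(p^2)N/p^2+O(\rho_f(p^2))$; since $\rho_f$ is multiplicative with $\rho_f(p^2)\ll p^{\ve}$ by \cite[Theorem~54]{Nagell1964}, the first range contributes $\ll_{\ve}Nz^{-1+\ve}+N^{\delta(1+\ve)}$, while the second contributes $O(N^{1-\eta})$ by \eqref{BB_err}. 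Hence, fixing $\ve$ small,
\[
\limsup_{N\to\infty}\frac1N\#\set{n\le N\colon\exists\,p>z,\ p^2\mid f(n)}\ \ll\ z^{-1+\ve}.
\]

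For the $z$-truncated average, expand $\prod_{p\le z}\bigl(1-\mathbf{1}_{\{p^2\mid f(n)\}}\bigr)=\sum_{d\mid P(z)}\boldsymbol{\mu}(d)\mathbf{1}_{\{d^2\mid f(n)\}}$ and split each inner sum into residue classes modulo $d^2$:
\[
\frac1N\sum_{1\le n\le N}a(n)\prod_{p\le z}\bigl(1-\mathbf{1}_{\{p^2\mid f(n)\}}\bigr)=\sum_{d\mid P(z)}\boldsymbol{\mu}(d)\sum_{\substack{\nu\mod{d^2}\\ f(\nu)\equiv0\mod{d^2}}}\ \frac1N\sum_{\substack{1\le n\le N\\ n\equiv\nu\mod{d^2}}}a(n).
\]
Every $d$ occurring divides the fixed integer $P(z)$, so the outer sums are finite and I may let $N\to\infty$ termwise, using the progression limit from the first paragraph together with $\#\set{\nu\mod{d^2}\colon f(\nu)\equiv0\mod{d^2}}=\rho_f(d^2)$; the right-hand side converges to $\alpha\sum_{d\mid P(z)}\boldsymbol{\mu}(d)\rho_f(d^2)/d^2=\alpha\prod_{p\le z}\bigl(1-\rho_f(p^2)/p^2\bigr)$. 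Combining this with the cost bound above,
\[
\limsup_{N\to\infty}\Bigl|\frac1N\sum_{1\le n\le N}\boldsymbol{\mu}^2(f(n))\,a(n)-\alpha\prod_{p\le z}\Bigl(1-\frac{\rho_f(p^2)}{p^2}\Bigr)\Bigr|\ \ll\ z^{-1+\ve}.
\]
Letting $z\to\infty$ (the product converges since $\rho_f(p^2)\ll p^{\ve}$ makes $\sum_p\rho_f(p^2)/p^2$ finite) yields \eqref{thm_BR_BB_eqn}.

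The main obstacle is that Theorem~\ref{thm_BR_cor_1.16} supplies the dynamical prime number theorem for progressions only for moduli that stay bounded as $N\to\infty$, which forces the square-sieve to be truncated at a fixed level $z$. The error thereby incurred is concentrated on the $n$ with $p^2\mid f(n)$ for some prime $p>z$, and controlling it --- in particular over the delicate range $p>N^{\delta}$ --- is exactly the content of \eqref{BB_err}, which is where the hypotheses that $f$ be square-free with irreducible factors of degree at most $3$ are used; everything else is elementary.
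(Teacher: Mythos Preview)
Your proof is correct and follows the same broad outline as the paper's --- square-sieve decomposition plus the Booker--Browning large-prime estimate \eqref{BB_err} --- but the execution differs in one useful respect. The paper sets the sieve level to $\xi=\tfrac16\log N$, growing with $N$; this forces it to control the intermediate range $\xi<p\le N^{\delta}$ via Hooley's bound $\Sigma_1=O(N/\log N)$, and then to introduce a second fixed parameter $H\le\xi$ so that Theorem~\ref{thm_BR_cor_1.16} (which applies only to bounded moduli) can be invoked on the main term. You instead keep the sieve level $z$ fixed independently of $N$ from the start: the inclusion--exclusion sum over $d\mid P(z)$ is then genuinely finite, you can pass to the limit $N\to\infty$ termwise without any second truncation, and the range $z<p\le N^{\delta}$ is handled by the elementary count $\rho_f(p^2)N/p^2+O(\rho_f(p^2))$ rather than by citing Hooley. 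The paper's two-parameter route would yield a quantitative error term if one were wanted; yours is cleaner for the bare limit statement. One minor point: your appeal to \cite[Theorem~54]{Nagell1964} for $\rho_f(p^2)\ll p^{\ve}$ is formulated there for irreducible $f$, whereas here $f$ is reducible --- the paper cites \cite[(5)]{BookerBrowning2016} instead, though in fact $\rho_f(p^2)=O_f(1)$ for all $p$ outside a finite set, which is all your argument needs.
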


\begin{proof}

Put $a(n)=g(T^{\Omega(n)}x)$ and $\alpha=\int_X g\,d\mu$. Then we have \eqref{eqn_BR_pntap}, and we may assume that  $|a(n)|\leq 1$ for all $n$. Let $\xi=\frac16\log N$. Then 
\begin{equation}\label{BB_total}
	\sum_{\substack{1\leq n \leq N \\ f(n) \text{ is squarefree}}} a(n) = \sum_{\substack{1\leq n \leq N \\ p^2 \nmid f(n),\forall p\leq \xi }} a(n) + O(\Sigma_1+\Sigma_2),
\end{equation}
where
\begin{equation}
	\Sigma_1=\#\set{n\leq N\colon \exists \, p\in (\xi, N^\delta] \text{ s.t. } p^2\mid f(n)}, \Sigma_2=\#\set{n\leq N\colon \exists \, p> N^\delta \text{ s.t. } p^2\mid f(n)},
\end{equation}
and $0<\delta<1/11$ is chosen to be as in \eqref{BB_err}.
	
By \cite[(127)]{Hooley1976}, we have
\begin{equation}
\Sigma_1=O\of{\frac{N}{\log N}}.
\end{equation}

By \eqref{BB_err}, we have
\begin{equation}
	\Sigma_2=O(N^{1-\eta})
\end{equation}
for some $\eta>0$.

Now, we estimate the first summation in \eqref{BB_total}. Let $l_1$ indicate either 1 or squarefree number composed entirely of prime factors not exceeding $\xi$. Then
\begin{align}
	\sum_{\substack{1\leq n \leq N \\ p^2 \nmid f(n),\forall p\leq \xi }} a(n) & = \sum_{1\leq n\leq N} \Big(\sum_{l_1^2 \mid f(n)} \boldsymbol{\mu}(l_1) \Big)a(n) \nonumber\\
	&=\sum_{l_1} \boldsymbol{\mu}(l_1) \sum_{\substack{1\leq n \leq N \\ l_1^2 \mid f(n)}} a(n) \nonumber\\
	&=\sum_{l_1} \boldsymbol{\mu}(l_1) \sum_{\substack{\nu \mod{l_1^2} \\ f(\nu) \equiv 0 \mod{l_1^2} }} \sum_{\substack{1\leq n \leq N \\ n\equiv \nu \mod{l_1^2} }} a(n) \nonumber\\
	&=\sum_{l_1} \boldsymbol{\mu}(l_1) \sum_{\substack{\nu \mod{l_1^2} \\ f(\nu) \equiv 0 \mod{l_1^2} }} \Big(\frac{N}{l_1^2} +O(1)\Big)\BEu{\substack{1\leq n \leq N \\ n\equiv \nu \mod{l_1^2} }} a(n)   \nonumber\\
	&=N \sum_{l_1} \frac{\boldsymbol{\mu}(l_1) }{l_1^2}  \sum_{\substack{\nu \mod{l_1^2} \\ f(\nu) \equiv 0 \mod{l_1^2} }} \BEu{\substack{1\leq n \leq N \\ n\equiv \nu \mod{l_1^2} }} a(n) + O\of{\sum_{l_1}|\boldsymbol{\mu}(l_1)|\rho_f(l_1^2)}. \label{BB_main_term}
\end{align}

By \cite[(5)]{BookerBrowning2016}, we have $|\boldsymbol{\mu}(l_1)|\rho_f(l_1^2)\ll l_1^\ve$ for any $\ve>0$.  Moreover, by \cite[(95)]{Hooley1976} we have $l_1\leq N^{1/3}$. It follows that 
\begin{equation}\label{BB_err2}
	\sum_{l_1}|\boldsymbol{\mu}(l_1)|\rho_f(l_1^2) \ll N^{1/3+\ve}.
\end{equation}

Combining \eqref{BB_total}-\eqref{BB_err2} together gives us that
\begin{equation}\label{BB_keyeqn}
	\frac1N\sum_{1\leq n\leq N} \boldsymbol{\mu}^2(f(n))a(n) = \sum_{l_1} \frac{\boldsymbol{\mu}(l_1) }{l_1^2}  \sum_{\substack{\nu \mod{l_1^2} \\ f(\nu) \equiv 0 \mod{l_1^2} }} \BEu{\substack{1\leq n \leq N \\ n\equiv \nu \mod{l_1^2} }} a(n) + O\of{\frac1{\log N}}.
\end{equation}

Let $H\leq \xi$, then
\begin{equation}
	\sum_{l_1} \frac{\boldsymbol{\mu}(l_1) }{l_1^2}  \sum_{\substack{\nu \mod{l_1^2} \\ f(\nu) \equiv 0 \mod{l_1^2} }} \BEu{\substack{1\leq n \leq N \\ n\equiv \nu \mod{l_1^2} }} a(n) = \sum_{l_1 \leq H } + \sum_{l_1 >H} \colonequals S_3+S_4.
\end{equation}
Moreover, by $|\boldsymbol{\mu}(l_1)|\rho_f(l_1^2)\ll l_1^\ve$ again,
\begin{equation}
	S_4\ll  \sum_{l_1 >H} \frac{|\boldsymbol{\mu}(l_1)|\rho_f(l_1^2)}{l_1^2} \ll  \sum_{l_1 >H} \frac{l_1^\ve}{l_1^2} \ll \frac{1}{H^{1-\ve}}. 
\end{equation}

This implies that
\begin{equation}\label{final_eqn}
	\frac1N\sum_{1\leq n\leq N} \boldsymbol{\mu}^2(f(n))a(n) = \sum_{l_1\leq H} \frac{\boldsymbol{\mu}(l_1) }{l_1^2}  \sum_{\substack{\nu \mod{l_1^2} \\ f(\nu) \equiv 0 \mod{l_1^2} }} \BEu{\substack{1\leq n \leq N \\ n\equiv \nu \mod{l_1^2} }} a(n) +O(H^{-1+\ve})+  O\of{\frac1{\log N}}\end{equation}
for any $H\leq \xi=\frac16\log N$.

For fixed $H\leq \xi$, by \eqref{eqn_BR_pntap}, we have
\begin{equation}
	\lim_{N\to\infty} \BEu{\substack{1\leq n \leq N \\ n\equiv \nu \mod{l_1^2} }} a(n)  = \alpha
\end{equation}
for any $l_1\leq H$. Similar to the proof of Theorem~\ref{mainthm}, \eqref{thm_BR_BB_eqn} follows immediately by  taking $N\to\infty$ and then $H\to\infty$ in \eqref{final_eqn}. This completes the proof of Theorem~\ref{thm_BR_BB}.
\end{proof}

Take $f(n)=(n^2+1)(n^2+2)$ in \eqref{thm_BR_BB_eqn}. Then by Theorem~\ref{thm_BR_BB} and \cite[Theorem~2.1]{Dimitrov2021}, we obtain the following result.
\begin{corollary}
	Let $(X,\mu, T)$ be a uniquely ergodic topological dynamical system. Then we have
	 	\begin{equation}
		\lim_{N\to\infty}\frac1N\sum_{\substack{1\leq n \leq N\\ n^2+1,\, n^2+2 \text{ squarefree}}}  g(T^{\Omega(n)}x) = \prod_{p>2}\Big(1-\frac{(-1/p)+(-2/p)+2}{p^2}\Big)\cdot \int_X g\,d\mu
	\end{equation}
for any $g\in C(X)$ and $x\in X$.
\end{corollary}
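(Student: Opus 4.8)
The plan is to run the argument behind Theorem~\ref{thm_BR_BB} for $f(x)=(x^2+1)(x^2+2)$, with one modification to absorb a fixed prime divisor. First I would record the relevant features of $f$: the factors $x^2+1$ and $x^2+2$ are distinct and irreducible of degree $2\le 3$, so $f$ is a non-constant squarefree polynomial each of whose irreducible factors has degree at most $3$; moreover, since $(n^2+2)-(n^2+1)=1$, the integers $n^2+1$ and $n^2+2$ are coprime for every $n$, so $f(n)$ is squarefree exactly when both $n^2+1$ and $n^2+2$ are squarefree, which is the sum appearing in the corollary. The one hypothesis of Theorem~\ref{thm_BR_BB} that fails is ``no fixed prime divisor'': for each $n$ exactly one of $n^2+1,n^2+2$ is even, and checking $n\bmod 2$ shows this even factor is always $\equiv 2\pmod 4$, so $2\,\|\,f(n)$ for all $n$. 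Hence $4\nmid f(n)$, so $\rho_f(2^2)=0$, and for each fixed $n$ one has $\boldsymbol{\mu}^2(f(n))=\boldsymbol{\mu}^2\of{f(n)/2}$ with $f(n)/2$ odd; in particular only odd primes can contribute a square divisor of $f(n)$.

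Next I would repeat the proof of Theorem~\ref{thm_BR_BB} with $a(n)=g(T^{\Omega(n)}x)$ and $\alpha=\int_X g\,d\mu$, but with every auxiliary squarefree modulus ($d$, $l_1$, $\ldots$) restricted to odd values, which is legitimate because $4\nmid f(n)$. With $\xi=\tfrac16\log N$, split the sum over $n\le N$ with $f(n)$ squarefree according to whether some $p\le\xi$, some $p\in(\xi,N^\delta]$, or some $p>N^\delta$ has $p^2\mid f(n)$. For odd $p$ one has $\rho_f(p^2)\le4$, so the elementary bound $\#\{n\le N:p^2\mid f(n)\}\le\rho_f(p^2)(N/p^2+1)$ summed over $p\in(\xi,N^\delta]$ handles the medium range with total $O(N/\log N)$, as in Theorem~\ref{thm_BR_BB}. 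The large range requires $\#\{n\le N:\exists\,p>N^\delta,\ p^2\mid f(n)\}=O(N^{1-\eta})$ for suitable constants $0<\delta<1$ and $\eta>0$; since our $f$ is reducible and has the fixed divisor $2$, \eqref{BB_err} does not literally cover it, and this is where I would invoke \cite[Theorem~2.1]{Dimitrov2021}. The small-$p$ part, after expanding $\sum_{l_1^2\mid f(n)}\boldsymbol{\mu}(l_1)$ over odd squarefree $l_1$ and reorganising exactly as in \eqref{BB_main_term}--\eqref{final_eqn} (truncating at an auxiliary $H\le\xi$ with tail $O(H^{-1+\ve})$ via $\rho_f(l_1^2)\ll l_1^\ve$, then letting $N\to\infty$ using the dynamical prime number theorem along arithmetic progressions, Theorem~\ref{thm_BR_cor_1.16}, and finally $H\to\infty$), yields $\lim_{N\to\infty}\frac1N\sum_{n\le N}\boldsymbol{\mu}^2(f(n))a(n)=\alpha\prod_{p\text{ odd}}\of{1-\rho_f(p^2)/p^2}$.

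It then remains to evaluate the Euler product. Because the resultant of $x^2+1$ and $x^2+2$ is $\pm1$, for every prime $p$ the congruence $f(\nu)\equiv0\pmod{p^2}$ forces exactly one of $\nu^2\equiv-1$, $\nu^2\equiv-2\pmod{p^2}$, so $\rho_f(p^2)=\rho_{x^2+1}(p^2)+\rho_{x^2+2}(p^2)$. For odd $p$ the derivative $2\nu$ is a unit at every root of $\nu^2+c$ with $c\in\{1,2\}$, so Hensel lifting gives $\rho_{x^2+c}(p^2)=1+(-c/p)$, whence $\rho_f(p^2)=2+(-1/p)+(-2/p)$. Combined with $\rho_f(4)=0$, this rewrites $\prod_{p\text{ odd}}\of{1-\rho_f(p^2)/p^2}$ as $\prod_{p>2}\of{1-\frac{(-1/p)+(-2/p)+2}{p^2}}$, the stated constant, and this is exactly why the product is taken over $p>2$ rather than over all $p$.

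I expect the genuine obstacle to be the large-modulus estimate $\#\{n\le N:\exists\,p>N^\delta,\ p^2\mid f(n)\}=O(N^{1-\eta})$: it is the only deep arithmetic input, and it is not the instance of \eqref{BB_err} established in the excerpt, because $f$ here is a reducible quartic carrying the fixed prime divisor $2$, whence the need for \cite[Theorem~2.1]{Dimitrov2021}. Everything else is bookkeeping: since $2\,\|\,f(n)$, Theorem~\ref{thm_BR_BB} cannot be quoted as a black box, and one carries the identity $\boldsymbol{\mu}^2(f(n))=\boldsymbol{\mu}^2(f(n)/2)$ and the odd-modulus restriction through the argument, a change that affects only the shape of the Euler product and not the limiting procedure.
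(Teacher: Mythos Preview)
Your argument is correct and is essentially a careful unpacking of the paper's one-line proof, which simply reads ``Take $f(n)=(n^2+1)(n^2+2)$ in \eqref{thm_BR_BB_eqn}. Then by Theorem~\ref{thm_BR_BB} and \cite[Theorem~2.1]{Dimitrov2021}, we obtain the following result.'' The paper thus quotes Theorem~\ref{thm_BR_BB} as a black box and invokes Dimitrov only to identify the Euler product $\prod_p(1-\rho_f(p^2)/p^2)$ with the Legendre-symbol expression. You noticed something the paper glosses over: $2\mid f(n)$ for every $n$, so the hypothesis ``no fixed prime divisor'' in Theorem~\ref{thm_BR_BB} is not satisfied, and you therefore rerun the proof with the moduli restricted to be odd (which is harmless since $\rho_f(4)=0$). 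You then invoke \cite{Dimitrov2021} for the large-prime power-saving estimate rather than for the constant, and compute $\rho_f(p^2)=2+(-1/p)+(-2/p)$ directly via Hensel. Both routes lead to the same place; yours is simply the more scrupulous version, and the only point to watch is that \cite[Theorem~2.1]{Dimitrov2021} is stated as an asymptotic formula rather than as the large-prime bound you need, so strictly speaking you want the intermediate estimate inside Dimitrov's proof (or, alternatively, you may observe that the Booker--Browning/Hooley/Reuss large-prime bound behind \eqref{BB_err} concerns only primes $p>N^\delta$ and is insensitive to the fixed small divisor $2$, so it already covers your $f$).
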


\section{ABC implies equation \eqref{conjecture}}\label{sec_ABC}

In this section, we consider the squarefree values of general polynomials. Let $r\ge2$ and let $f(x)\in \Z[x]$ be a polynomial of degree $r$. Define $G_f\colonequals \gcd(f(n):n\geq1)$. Assume $f$ has no repeated factors and $G_f$ is squarefree. In 1998, under the ABC conjecture, Granville \cite{Granville1998} showed the asymptotic formula
\begin{equation}\label{Granville1998}
	\#\set{n\leq N \colon f(n) \text{ is squarefree}} \sim  c_f\cdot N
\end{equation}
for $c_f\colonequals\prod_p\Big(1-\frac{\rho_f(p^2)}{p^2}\Big)>0$, where $\rho_f(p^2)$ is defined by \eqref{dfn_rho}. Then Lee and Murty \cite{LeeMurty2007} provided an error term for \eqref{Granville1998} under both the ABC conjecture and the abscissa conjecture. In 2014, Murty and Pasten \cite{MurtyPasten2014} obtained an error term for \eqref{Granville1998} under only the ABC conjecture. In the following, under the assumption that the ABC conjecture is true, we will explain how to use Murty and Pasten's results to obtain \eqref{conjecture} for squarefree numbers.

\begin{theorem}\label{mainthm_ABC}
Let $r\ge2$. Let $f(x)\in \Z[x]$ be a polynomial of degree $r$, without repeated factors, and with $G_f$ squarefree. Let $(X,\mu, T)$ be a uniquely ergodic topological dynamical system. Then assuming the ABC conjecture, we have
\begin{equation}\label{mainthm_ABC_eqn}
		\lim_{N\to\infty}\frac1N\sum_{\substack{1\leq n\leq N \\ f(n)  \text{ is squarefree} }}  g(T^{\Omega(n)}x) = c_f \cdot \int_X g\,d\mu
	\end{equation}
for any $g\in C(X)$ and $x\in X$, where $c_f$ is defined  as in \eqref{Granville1998}.

\end{theorem}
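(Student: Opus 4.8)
The plan is to repeat, with $k=2$, the elementary argument behind Theorem~\ref{thm_general} and Theorem~\ref{thm_BR_BB}, the only genuinely new ingredient being an ABC-conditional bound on the tail term $E_f(Y,N)$ from \eqref{dfn_E_f}. Put $a(n)=g(T^{\Omega(n)}x)$ and $\alpha=\int_X g\,d\mu$; then $|a(n)|\le 1$ and \eqref{eqn_BR_pntap} holds for all $m\in\N$ and $r\in\set{0,1,\dots,m-1}$ by Theorem~\ref{thm_BR_cor_1.16}. Although $f$ need not be irreducible here, it has no repeated factors, so $\rho_f$ is still multiplicative (by the Chinese remainder theorem) and still satisfies $\rho_f(d^2)\ll_\ve d^\ve$ over squarefree $d$: one applies \cite[Theorem~54]{Nagell1964} to each irreducible factor at the primes of good reduction and uses the boundedness of $\rho_f(p^2)$ at the finitely many remaining primes (cf. \cite[(5)]{BookerBrowning2016}), and here the absence of repeated factors is what prevents $\rho_f(p^2)$ from being of size $\asymp p$. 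Since the proof of Proposition~\ref{keyprop} uses irreducibility only through this bound, it carries over verbatim: for any $\ve>0$ and $1\le D\le Y\le|f(N)|^{1/2}$,
\begin{equation*} \sum_{1\le n\le N}\boldsymbol{\mu}^2(f(n))a(n) = N\sum_{d\le D}\frac{\boldsymbol{\mu}(d)}{d^2}\sum_{\substack{\nu \mod{d^2}\\ f(\nu)\equiv 0 \mod{d^2}}}\BEu{\substack{1\le n\le N\\ n\equiv\nu \mod{d^2}}}a(n) + O\of{ND^{-1+\ve}+Y^{1+\ve}} + O\big(E_f(Y,N)\big). \end{equation*}

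Next I would bound $E_f(N^{1-\delta},N)$. By the divisor bound, for each $n\le N$ the number of squarefree $d$ with $d^2\mid f(n)$ is $\ll_\ve N^\ve$, so
\begin{equation*} E_f(Y,N)\ll_\ve N^\ve\cdot\#\set{n\le N\colon d^2\mid f(n)\text{ for some squarefree }d>Y}. \end{equation*}
Under the ABC conjecture, and for $f$ without repeated factors and with $G_f$ squarefree, the work of Murty and Pasten \cite{MurtyPasten2014} controls precisely this type of counting function: their analysis produces a bound of the shape $\#\set{n\le N\colon f(n)\text{ has a square divisor}>z}\ll N^{1+\ve}z^{-\kappa}$ for some $\kappa=\kappa(r)>0$. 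Taking $z=N^{\delta}$ with $\delta>0$ small then yields $E_f(N^{1-\delta},N)\ll N^{1-\Delta}$ for some $\Delta>0$, and this is the only place the ABC conjecture enters; the hypothesis that $G_f$ is squarefree is exactly what makes this estimate, and the positivity of $c_f$ in \eqref{Granville1998}, valid. I expect this step to be the main obstacle: one must confirm that Murty and Pasten's ABC-conditional estimate genuinely has this quantitative, power-in-$z$ form, rather than merely furnishing an $o(N)$ asymptotic for the squarefree count, and check that replacing ``large prime square divisor'' by ``large squarefree square divisor'' costs at most a factor $N^\ve$.

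With the tail bound in hand the conclusion follows as in the proof of Theorem~\ref{thm_general}. Fix $D$, take $Y=N^{1-\delta}$ and $\ve$ small enough that $Y^{1+\ve}$ and $N^{1-\Delta+\ve}$ are $O(N^{1-c})$ for some $c>0$; dividing the displayed identity by $N$ and letting $N\to\infty$ with $D$ fixed, \eqref{eqn_BR_pntap} shows that the inner average over $n\equiv\nu \mod{d^2}$ tends to $\alpha$ for every $d\le D$ and every residue $\nu$, so
\begin{equation*} \lim_{N\to\infty}\frac1N\sum_{1\le n\le N}\boldsymbol{\mu}^2(f(n))a(n) = \alpha\sum_{d\le D}\frac{\boldsymbol{\mu}(d)\rho_f(d^2)}{d^2} + O(D^{-1+\ve}). \end{equation*}
Letting $D\to\infty$ and using the multiplicativity of $\rho_f$ together with $\sum_p\rho_f(p^2)/p^2<\infty$ (valid since $\rho_f(p^2)\le r$ for all but finitely many $p$), the sum converges to $\prod_p\big(1-\rho_f(p^2)/p^2\big)=c_f$, which gives \eqref{mainthm_ABC_eqn}.
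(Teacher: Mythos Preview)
Your suspicion about the ``main obstacle'' is correct, and it is fatal for the argument as written. The input you need from \cite{MurtyPasten2014} is a \emph{power}-saving bound $E_f(N^{1-\delta},N)\ll N^{1-\Delta}$, but that paper does not supply one: under ABC it gives only
\[
\Sigma_4\colonequals\#\set{n\le N:\exists\,p>N\text{ with }p^2\mid f(n)}=O\!\big(N/(\log N)^{\gamma}\big)
\]
for some $0<\gamma<1$ depending on $r$, while the intermediate range $\xi<p\le N$ (with $\xi$ around $\log N$) is handled unconditionally by Hooley's estimate $\Sigma_3=O(N/\log N)$. Neither of these is a power saving, and you cannot manufacture one by your divisor-bound reduction: passing from the double sum $E_f(Y,N)$ to the count of bad $n$ costs a factor $N^{\ve}$ (or, more honestly, $\exp\!\big(O(\log N/\log\log N)\big)$), so the best you obtain is $E_f(Y,N)\ll N^{\ve}\cdot N/(\log N)^{\gamma}$, which is not $o(N)$. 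Thus the route through Proposition~\ref{keyprop} with $Y=N^{1-\delta}$ collapses exactly at the step you flagged.

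The paper avoids this by \emph{not} using Proposition~\ref{keyprop} here. Instead it follows the Hooley decomposition of Theorem~\ref{thm_BR_BB}: sieve by \emph{primes} $p\le\xi=\tfrac16\log N$ (so the moduli $l_1$ are $\xi$-smooth squarefrees, hence $l_1\le N^{1/3}$), and push everything else into $\Sigma_3+\Sigma_4$. Because the error is now a count of $n$ rather than of pairs $(d,n)$, the logarithmic savings $O(N/\log N)$ and $O(N/(\log N)^{\gamma})$ from Hooley and Murty--Pasten survive intact, and one reaches the analogue of \eqref{BB_keyeqn} with error $O\!\big((\log N)^{-\gamma}\big)$. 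That is enough to take $N\to\infty$ and then $H\to\infty$ as in the proof of Theorem~\ref{thm_BR_BB}. In short, the fix is to change the decomposition, not to search for a stronger ABC input.
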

\begin{proof}

The argument is similar to the proof of Theorem~\ref{thm_BR_BB}. We put $a(n)=g(T^{\Omega(n)}x)$ and $\alpha=\int_X g\,d\mu$. Let $\xi=\frac16\log N$. Then 
\begin{equation}\label{ABC_total}
	\sum_{\substack{1\leq n \leq N \\ f(n) \text{ is squarefree}}} a(n) = \sum_{\substack{1\leq n \leq N \\ p^2 \nmid f(n),\forall p\leq \xi }} a(n) + O(\Sigma_3+\Sigma_4),
\end{equation}
where
\begin{equation}
	\Sigma_3=\#\set{n\leq N\colon \exists \, p\in (\xi, N] \text{ s.t. } p^2\mid f(n)}, \Sigma_4=\#\set{n\leq N\colon \exists \, p> N \text{ s.t. } p^2\mid f(n)}.
\end{equation}
	
By \cite[Lemma 5.4]{MurtyPasten2014}, we have the same error term as $\Sigma_1$:
\begin{equation}
\Sigma_3=O\of{\frac{N}{\log N}}.
\end{equation}

By \cite[\S6(2)]{MurtyPasten2014}, which is proved under the ABC conjecture, we have a different error term from $\Sigma_2$:
\begin{equation}
	\Sigma_4=O\of{\frac{N}{(\log N)^\gamma}},
\end{equation}
where $0<\gamma<1$ is a computable constant that depends only on $r$. 

As regards the summation in \eqref{ABC_total} over $1\leq n \leq N $ with $p^2 \nmid f(n),\forall p\leq \xi$, the estimates \eqref{BB_main_term} and \eqref{BB_err2} still hold. This implies that 
\begin{equation} \label{ABC_keyeqn}
	\frac1N\sum_{1\leq n\leq N} \boldsymbol{\mu}^2(f(n))a(n) = \sum_{l_1 \mid \prod_{p\leq \xi}p} \frac{\boldsymbol{\mu}(l_1) }{l_1^2}  \sum_{\substack{\nu \mod{l_1^2} \\ f(\nu) \equiv 0 \mod{l_1^2} }} \BEu{\substack{1\leq n \leq N \\ n\equiv \nu \mod{l_1^2} }} a(n) + O\of{\frac1{(\log N)^\gamma}}.
\end{equation}

We see that \eqref{ABC_keyeqn} only differs from \eqref{BB_keyeqn} by an error term $O((\log N)^{-\gamma})$. This error term vanishes as $N\to\infty$.  Thus, by the remaining argument after \eqref{BB_keyeqn} in the proof of Theorem~\ref{thm_BR_BB}, we can get \eqref{mainthm_ABC_eqn} from \eqref{ABC_keyeqn}. This completes the proof of Theorem~\ref{mainthm_ABC}.
\end{proof}

\begin{remark}
	Combining the proof of Theorem~\ref{thm_BR_BB} and the ideas in the work \cite{Hooley1967} of Hooley, one can also show that \eqref{conjecture} holds for any irreducible integral polynomial $f(n)$ of degree $d\ge3$ and $k=d-1$. Moreover, 	Theorems~\ref{mainthm} and  \ref{thm_BR_BB} still hold if $g(T^{\Omega(n)}x)$ is replaced by $g(T^{\Omega(mn+r)}x)$ for any $g\in C(X)$, $x\in X$, $m\in \N$ and $r\in \set{0,1\dots,m-1}$. By \cite[Corollary 1.6]{Xiao2024}, they also hold for $g(T^{\Omega(\lfloor \alpha n +\beta\rfloor)}x)$ along Beaty sequences $\lfloor \alpha n +\beta\rfloor$, where $\alpha>0, \beta\in \R$ and $\alpha+\beta>1$. Here $\lfloor t \rfloor$ is the largest integer such that $0\leq t-\lfloor t \rfloor <1$.
\end{remark}

\begin{remark} Let $a,b: \N \to \C$ be two sequences, they are called \textit{asymptotically uncorrelated} if 
\[
\lim_{N\to\infty}\left(\AVG a(n) \overline{b(n)} - \bigg( \AVG a(n) \bigg)\bigg( \AVG \overline{b(n)} \bigg) \right)=0.
\]
For example, in 2010 Sarnak \cite{Sarnak2010ias,Sarnak2010} conjectured that any deterministic sequence is asymptotically uncorrelated with the M\"obius function. In \cite{Loyd2023}, Loyd showed that the sequences capturing the behaviors of the Erd\H{o}s-Kac theorem and Bergelson-Richter’s theorem are asymptotically uncorrelated. This leads to explore what arithmetic functions $h:\N\to\C$ satisfy the following disjointness with $g(T^{\Omega(n)}x)$ in \eqref{eqn_BR2022thmA}:
\begin{equation}\label{question}
		\lim_{N\to\infty}\frac1N\sum_{n=1}^N h(n)g(T^{\Omega(n)}x)=\lim_{N\to\infty}\frac1N\sum_{n=1}^N h(n)\cdot\int_X g \,d\mu
\end{equation}
for any $g\in C(X)$ and $x\in X$, provided the mean of $h(n)$ exists. In \cite{Wang2022,WWYY2023, Wang2025jnt, Wang2025bams, WangYi2025, LWWY2025}, such kind of disjointness are established for various arithmetic functions related to the largest prime factors of integers, $k$-free numbers, $k$-full numbers, $\Omega(n)-\omega(n)$ and squarefull kernel functions, where $\omega(n)=\sum_{p\mid n}1$. One may expect \eqref{question} to hold for more functions. This will give a series of generalizations of the PNT in this kind of disjointness form.
\end{remark}

\section*{Acknowledgments}
The authors are deeply grateful to the referee for a very careful review and a number of insightful comments and suggestions, which improve this paper a lot; in particular, for the suggestion of using the results of Murty and Pasten in \cite{MurtyPasten2014} to deduce Theorem~\ref{mainthm_ABC}.  Biao Wang is supported by the National Natural Science Foundation of China (Grant No.~12561001). Shaoyun Yi is supported by the National Natural Science Foundation of China (Nos.~12301016, 12471187) and the Fundamental Research Funds for the Central Universities (No.~20720230025).

\end{document}